\newtheorem{lemma}{Lemma}
\newtheorem{definition}{Definition}
\newtheorem{conjecture}{Conjecture}
\begin{document}

\begin{center}\Large

\textbf{Non-transversal Vectors of Some Finite Geometries}

\bigskip\large

Ivica Martinjak\\
University of Zagreb, Faculty of Science\\
Bijeni\v cka cesta 32, HR-10000 Zagreb, Croatia


\end{center}


\begin{abstract}
By means of associated structural invariants, we efficiently construct four biplanes of order 9 - except the one with the smallest automorphism group, that is found by Janko and Trung. The notion of non-transversal vector is introduced since we observed related properties that provide significantly more efficient constructions. There is a dichotomy in the structure of biplanes of order 7 and 9 with respect to the incidence matrix symmetry.

\bigskip\noindent \textbf{Keywords:} finite geometry, biplane, finite projective plane, incidence structure, automorphism group.\\
{\bf MSC} 05B25, 05B20.
\end{abstract}

\section{Preliminaries}

We let $I_n$, $J_{m,n}$ and $0_{m,n}$ denote identity matrix, unit matrix and zero matrix, respectively, where $m$ and $n$ are dimensions of a matrix. Let $K_{m,n}$ denote a $(m +1) \times n$ matrix having all zeros in the first $m-n$ rows, than all 1s in the following row and identity matrix on the rest of entries, 
$$
K_{m,n}:=\begin{pmatrix}
0_{m-n,n}
\cr J_{1,n}
\cr I_n
\end{pmatrix}.
$$
We define $c=(c_1,\ldots,c_i,\ldots,c_n)$ as a vector with $c_i=1$ while all other entries are 0. The matrix corresponding to the permutation 
$$
\pi_1=
\begin{pmatrix}
c_1 & c_2&\ldots&c_n
\end{pmatrix}
$$
is denoted by $C_n^i$ and called {\it cyclic matrix}. The matrix defined by the permutation 
$$
\pi_2=(c_1c_{n})(c_2c_{n-1})\ldots
$$
is denoted by $C_n^{-i}$ and called {\it anticyclic matrix}. We also write $C_n^{-}$ when $i=n$. A matrix $T_{n}^i$ is consisted of principal submatrices $I_i$ and $C_{n-i}^2$,
$$
T_{n}^i:=\begin{pmatrix}
I_i &0_{i,n-i}
\cr 0_{n-i,i}& C_{n-i}^2
\end{pmatrix}.
$$
The trace of a matrix $M$ is denoted by $tr(M)$.

The scalar product $f(a,b)=a \cdot b$ of vectors $a=(a_1,a_2,\ldots,a_n)$ and $b=(b_1,b_2,\ldots,b_n)$ is defined as $f(a,b):=a_1b_1+a_2b_2+\ldots+a_nb_n$. Let $M$ be a matrix of order $m$ and $g$ a function defined as $g(M,a)=M\cdot a$. Then the {\it level set} ${\cal M}^{n,c}$ defined as
$$
g^{-1}(cJ_{n1}):=\{b : g(M,b)=c J_{n1}\}
$$
we shall call $c$-{\it space of matrix} $M$. We assume $c=2$ when this value is omitted. For a given vector $a$ that is an element of some subset of ${\cal M}^{n,c}$, the level set 
$$
f_{a,j}^{-1}(c):=\{ a \in {\cal M}_i^{n,c} : f(a,b)=c, \enspace b \in {\cal M}_j^{n,c}, \enspace  {\cal M}_i^{n,c} \cap {\cal M}_j^{n,c}=\emptyset\}
$$
is called $j$-th level set of $a$ for the value $c$.

\section{Introduction}

\begin{definition}\label{Def1}
A biplane ${\cal B}=({\cal P},{\cal L})$ is a symmetric incidence structure consisting of the set of points ${\cal P}=\{p_1,p_2,\ldots,p_v\}$ and the set of lines ${\cal L}=\{L_1,L_2,\ldots,L_v\}$ having $k$ points on every one of $v$ lines, where any two lines intersect in two points. 
\end{definition}

This means that a biplane is uniquely determined by parameters $2$-$(v,k,2)$. According to the basic properties of an {\it uniform} and {\it balanced} incidence structure, these parameters are related as 
\begin{eqnarray}
v=1+\binom{k}{2}.
\end{eqnarray} 
The order $n$ of a biplane ${\cal B}$ is defined as $n:=k-2$. 

\begin{definition}\label{Def2}
Let ${\cal B}$  be a biplane. A matrix $M=[m_{ij}]$ with dimensions $v \times v$ defined by 
$$
m_{ij}:=\begin{cases} 1 &\mbox{if } p_i \in L_j \\
0 & \mbox{if } p_i \notin L_j. \end{cases}
$$
is called incidence matrix of a biplane ${\cal B}$.
\end{definition}

A converse statement holds as well. Let $M$ be a binary matrix with dimensions $v \times v$. Then $M$ is an incidence matrix of a biplane ${\cal B}$ with parameters $2$-$(v,k,2)$ if and only if it holds
\begin{eqnarray} \label{incmatcond1}
J_{1v} \cdot M&=&kJ_{1v}\\ 
M \cdot M^{T}&=&2 J_v+(k-2)I_v \label{incmatcond2}.
\end{eqnarray}
Thus, such matrix has the sum of every column equal to $k$, the sum of every row equals also to $k$ and the property that scalar product of every two rows is equal to 2. Throughout this work a biplane is represented by its {\it incidence matrix}.

For a biplane ${\cal B}$ we define a {\it dual} biplane ${\cal B}^{T}=({\cal P}^{T},{\cal L}^{T})$ such that ${\cal P}^{T} = {\cal L}$ and ${\cal L}^{T} = {\cal P}$. When a biplane ${\cal B}$ is represented by an incidence matrix $M$, the dual ${\cal B}^{T}$ of ${\cal B}$ is represented by a transpose matrix $M^{T}$.

Biplanes ${\cal B}$ and ${\cal B}'$ are {\it isomorphic} if there exists an isomorphism from ${\cal B}$ to ${\cal B}'$. An isomorphism from ${\cal B}$ to ${\cal B}$ is called {\it automorphism}. The set of all automorphisms of a biplane ${\cal B}$ is called the {\it full automorphism group}, denoted $Aut({\cal B})$. 

We let ${\cal B}_{na}$, ${\cal B}_{nb}$ denote biplanes of order $n$ in respect to ascending order of $|Aut(\cal B)|$. In particular, we let ${\cal B}_{n}$ denote a unique biplane of order $n$, in short. There are $17$ known biplanes, with $7$ different orders \cite{MaRo}. These are ${\cal B}_{1}$, ${\cal B}_{2}$, ${\cal B}_{3}$, ${\cal B}_{4a}$, ${\cal B}_{4b}$, ${\cal B}_{4c}$, ${\cal B}_{7a}$, \ldots, ${\cal B}_{7d}$, ${\cal B}_{9a}$, \ldots, ${\cal B}_{9e}$, ${\cal B}_{11a}$, ${\cal B}_{11b}$. It is known that there are infinitely many projective planes whereas for biplanes, triplanes and other finite geometries this question is open. The conjecture is that there are finitely many biplanes \cite{Cam}. More about biplanes one can find in \cite{HuPi} and \cite{Sal1} while the broader context is provided in \cite{Stin} and \cite{McKay}. It is worth mentioning that biplanes are very regular structures with many relations with other combinatorial and algebraic structures including graphs, hypergraphs, difference sets and association schemes \cite{BlBr, Mart}.

The structure with the largest known parameters is one of order $11$, found by M. Aschbacher \cite{Asc}. A complete classification of biplanes having 56 points is recently done by P. Kaski and P. R. J. \"{O}sterg\r and \cite{KO} while the last new biplane is ${\cal B}_{9a}$ having $|Aut({\cal B}_{9a})|=24$ that is found by Z. Janko and T. Trung \cite{JT}. This work is the result of our aim at finding efficient constructions of biplanes of higher orders.

It is known that a few first rows within a finite geometry can be determined without loosing generality, up to isomorphism. In case of biplanes, it is possible to determine first $k$ rows is that sense. Let ${\cal B}$ be a biplane of order $k-2$ and let $M=[m_{ij}]$ be its incidence matrix. Then the first $k$ rows and the first $k$ columns of $M$ are uniquely determined, up to isomorphism,
\begin{eqnarray} \label{CanMat}
M&=&
\begin{pmatrix}
J_{k,1} K_{k,k-1} & K_{k,k-2} & \ldots & K_{k,1} \cr
K_{k,k-2}^T& D^{1,1} & \ldots & D^{1,k-2} \cr
\ldots&  \ldots& \ldots&\ldots \cr
 K_{k,1}^T&D^{k-2,1} & \ldots & D^{k-2,k-2}
\end{pmatrix}.
\end{eqnarray}
The rest of elements let be separated into submatrices $D^{i,j}$, $i,j=1,\ldots,k-2$. We say that an incidence matrix $M$ (\ref{CanMat}) is in {\it canonical form}. A canonical incidence matrix of the biplane ${\cal B}_{4c}$ is depicted on Figure \ref{Fig0}  (with $\cdot$ instead of 0).


\begin{figure}[h!]
\begin{center}
\begin{scriptsize}
\begin{tabular}{|c@{\hspace{0.5em}}c@{\hspace{0.5em}}c@{\hspace{0.5em}}c@{\hspace{0.5em}}c@{\hspace{0.5em}}c@{\hspace{0.5em}}|c@{\hspace{0.5em}}c@{\hspace{0.5em}}c@{\hspace{0.5em}}c@{\hspace{0.5em}}|c@{\hspace{0.5em}}c@{\hspace{0.5em}}c@{\hspace{0.5em}}|c@{\hspace{0.5em}}c@{\hspace{0.5em}}|c@{\hspace{0.5em}}|} \hline
1 & 1 & 1 & 1 & 1 & 1 & $\cdot$ & $\cdot$ &  $\cdot$ &  $\cdot$ &  $\cdot$ & $ \cdot$ &  $\cdot$ & $\cdot$ & $\cdot$ & $\cdot$ \\
1 & 1 & $\cdot$ & $\cdot$& $\cdot$ & $\cdot$ & 1 & 1 & 1 & 1 &  $\cdot$ & $ \cdot$ &  $\cdot$ & $\cdot$ & $\cdot$ & $\cdot$ \\
1 & $\cdot$ & 1 & $\cdot$& $\cdot$ & $\cdot$ & 1 & $\cdot$ & $\cdot$ & $\cdot$ &  1 & 1 &  1 & $\cdot$ & $\cdot$ & $\cdot$ \\
1 & $\cdot$ & $\cdot$ & 1 & $\cdot$ & $\cdot$ & $\cdot$ & 1 & $\cdot$ & $\cdot$ &  1 & $\cdot$ & $\cdot$ & 1 &  1   & $\cdot$ \\
1 & $\cdot$ & $\cdot$ & $\cdot$ & 1 & $\cdot$ & $\cdot$ & $\cdot$ & 1 & $\cdot$ &  $\cdot$ & 1 & $\cdot$ & 1 &  $\cdot$   & 1 \\
1 & $\cdot$ & $\cdot$ & $\cdot$ &  $\cdot$ & 1 & $\cdot$ & $\cdot$ & $\cdot$ & 1 &  $\cdot$ & $\cdot$ & 1 &    $\cdot$ & 1  & 1\\ \hline
$\cdot$  & 1 & 1 & $\cdot$ &  $\cdot$ & $\cdot$  & 1 & $\cdot$ & $\cdot$ & $\cdot$  &  $\cdot$ & $\cdot$ & $\cdot$  & 1 & 1  & 1\\
$\cdot$  & 1 &  $\cdot$  & 1 &  $\cdot$ & $\cdot$  & $\cdot$ & 1 &  $\cdot$ & $\cdot$  &  $\cdot$ & 1 & 1 & $\cdot$ & $\cdot$   & 1\\
$\cdot$  & 1 &  $\cdot$  &  $\cdot$ & 1 &  $\cdot$  & $\cdot$ & $\cdot$ & 1 & $\cdot$  & 1 &  $\cdot$ & 1 & $\cdot$  & 1 & $\cdot$  \\
$\cdot$  & 1 &  $\cdot$  &  $\cdot$ &   $\cdot$ & 1  & $\cdot$ & $\cdot$ &  $\cdot$ & 1  & 1 &  1 & $\cdot$ &  1 &  $\cdot$& $\cdot$  \\ \hline
$\cdot$  & $\cdot$  &  1  &  1 &   $\cdot$ & $\cdot$ & $\cdot$ & $\cdot$ &  1 & 1  & 1 &  $\cdot$ & $\cdot$ &  $\cdot$ &  $\cdot$& 1  \\
$\cdot$  & $\cdot$  &  1  &  $\cdot$ &   1 & $\cdot$ & $\cdot$ & 1 &  $\cdot$ & 1  & $\cdot$ &  $\cdot$ & 1 &1&  $\cdot$ &  $\cdot$  \\
$\cdot$  & $\cdot$  &  1  &  $\cdot$  &    1 &$\cdot$ & $\cdot$ & 1 &  1  & $\cdot$  & $\cdot$ &  1 & $\cdot$  & $\cdot$ & 1  &  $\cdot$  \\ \hline
$\cdot$  & $\cdot$  &    $\cdot$  &  1  &  1 &$\cdot$ & 1 & $\cdot$  & $\cdot$ &  1   & $\cdot$ &  1 & $\cdot$  & $\cdot$ & 1  &  $\cdot$  \\
$\cdot$  & $\cdot$  &    $\cdot$  &  1  &  $\cdot$ & 1  & 1 & $\cdot$  &  1  & $\cdot$   & $\cdot$ &  $\cdot$ & 1 & 1 & $\cdot$   &  $\cdot$  \\ \hline
$\cdot$  & $\cdot$  &    $\cdot$  & $\cdot$  &  1 & 1  & 1 & 1 &  $\cdot$  & $\cdot$   & 1 &  $\cdot$ & $\cdot$ & $\cdot$ & $\cdot$   &  1 \\ \hline
\end{tabular}
\end{scriptsize}
\caption{Scheme of a canonical incidence matrix of the biplane ${\cal B}_{4c}$.}\label{Fig0}
\end{center}
\end{figure}


\section{Constructions with Associated Structural Invariants}

Let $M_{[k]}$ denotes canonical rows of $M$, i.e. the first $k$ rows of $M$.

Now, we define 2-space of biplane's canonical incidence matrix $M$, as the set of all (0,1)-vectors having scalar product equal to 2 with every row of $M_{[k]}$.
\begin{definition}
Let ${\cal B}$ be a biplane of order $k$ represented by a canonical incidence matrix $M$. Then the set  
$
{\cal M}^v:=\{b : g(M_{[k]},b)=2J_{k1}\}
$
is called 2-space of the biplane ${\cal B}$.  
\end{definition}

In this work we perform biplane constructions based on the set ${\cal \grave{M}}^{v} \subset {\cal M}^v$ that we define as
\begin{eqnarray}
 {\cal \grave{M}}^{v}:=   \{b \in {\cal M}_i^v : b[i]=1, \enspace  i=k+1,\ldots,v \}.
\end{eqnarray}
The following section shows that such constructions are efficient and that property of having trace equal to the number of points exists within biplanes with large parameters.
It can be shown that for a biplane of order $k-2$ the subsets ${\cal \grave{M}}_i^{v} \subset {\cal \grave{M}}^{v}$ have the same cardinality,
$$
|{\cal \grave{M}}_i^{v}|=q, \enspace  i=k+1,\ldots,v, \enspace q \in \mathbb N.
$$
Consequently, the upper bound for $|{\cal \grave{M}}_i^{v}|$ is 
\begin{eqnarray*}
 |{\cal \grave{M}}_i^{v}| \leq   \binom{k-2}{2}\binom{v-3k+5}{k-5}.
\end{eqnarray*}
 More precisely, our experiments shows that for the known orders $1$, $2$, $3$, $4$, $7$, $9$, $11$ of biplanes, $q$ is equal to $1$, $0$, $0$, $1$, $70$, $3507$, $286884$, respectively.

When all entries on the main diagonal are equal to $1$, then submatrices $D^{i,j}$ contain the same pattern of zeros as matrices $K_{mn}$ of 1s. This fact is expressed by Lemma \ref{Cor2}. Dual statements of relations (\ref{lemma1aid}) and (\ref{lemma1bid}) hold as well.

\begin{lemma} \label{Cor2}
Let ${\cal B}$ be a biplane of order $k-2$ and $M$ its canonical incidence matrix having trace $tr(M)=1+\binom{k}{2}$. Then we have
\begin{eqnarray}
D^{i,j}[j-i,l]=0, \enspace \forall l, \enspace i < j,\label{lemma1aid}\\ 
D^{i,j}[j-i+l,l]=0, \enspace \forall l, \enspace i < j, \label{lemma1bid}
\end{eqnarray}
where $i,j=1,\ldots,k-2$.
\end{lemma}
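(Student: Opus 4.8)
The hypothesis $tr(M)=1+\binom{k}{2}=v$ forces every diagonal entry of the binary matrix $M$ to equal $1$, i.e. $p_s\in L_s$ for all $s$; this self-incidence is the extra ingredient I would exploit, alongside (\ref{incmatcond2}), which states that any two distinct points lie on exactly two common lines. First I would fix the labeling forced by the canonical form (\ref{CanMat}): the group-$0$ lines $L_1,\dots,L_k$ are the lines through $p_1$ and the group-$0$ points $p_1,\dots,p_k$ are the points of $L_1$; reading off the block $K_{k,k-1-i}^{T}$ in the first block-column shows that the $l$-th point of group $i\ge1$, written $\hat p_{i,l}$, meets the group-$0$ lines in exactly $L_{i+1}$ and $L_{i+1+l}$, while $K_{k,k-1-j}$ in the first block-row shows that the $m$-th line of group $j\ge1$, written $\hat L_{j,m}$, meets $L_1$ in exactly $p_{j+1}$ and $p_{j+1+m}$. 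In this notation $D^{i,j}[l,m]=1$ iff $\hat p_{i,l}\in\hat L_{j,m}$, and the self-incidence reads $\hat p_{t,l}\in\hat L_{t,l}$.

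For (\ref{lemma1aid}) I would take the row $l=j-i$, which is legitimate since $1\le j-i\le k-1-i$ whenever $i<j$. The point $\hat p_{i,j-i}$ lies on $L_{i+1}$ and on $L_{i+1+(j-i)}=L_{j+1}$, and by self-incidence it also lies on $\hat L_{i,j-i}$, which by the group-$i$ data passes through $p_{i+1}$ and $p_{i+1+(j-i)}=p_{j+1}$. Thus both $L_{j+1}$ (a group-$0$ line) and $\hat L_{i,j-i}$ (a group-$i$ line) contain the two distinct points $\hat p_{i,j-i}$ and $p_{j+1}$. If $D^{i,j}[j-i,m]=1$ for some $m$, then $\hat L_{j,m}$ — which already contains $p_{j+1}$ — would also contain $\hat p_{i,j-i}$; being a group-$j$ line with $j\notin\{0,i\}$ it is distinct from the previous two, producing three common lines of $\hat p_{i,j-i}$ and $p_{j+1}$ and contradicting (\ref{incmatcond2}). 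Hence the entire row $j-i$ of $D^{i,j}$ vanishes.

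Relation (\ref{lemma1bid}) follows by the same mechanism with a shift: for the entry $[j-i+l,l]$ the point $\hat p_{i,j-i+l}$ lies on $L_{i+1}$ and on $L_{j+1+l}$, its self-incident line $\hat L_{i,j-i+l}$ passes through $p_{i+1}$ and $p_{j+1+l}$, and the target line $\hat L_{j,l}$ passes through $p_{j+1+l}$; the index ranges again match exactly, since the constraint $l\le k-1-j$ is precisely the column-count of $D^{i,j}$. So $L_{j+1+l}$ and $\hat L_{i,j-i+l}$ are two common lines of the distinct points $\hat p_{i,j-i+l}$ and $p_{j+1+l}$, and the hypothetical incidence $\hat p_{i,j-i+l}\in\hat L_{j,l}$ would force a forbidden third. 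The dual equations are obtained verbatim by running the argument on $M^{T}$, which is again a canonical incidence matrix with all-ones diagonal.

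The main obstacle is not the final contradiction, which is short, but the bookkeeping of the first two steps: one must verify that the canonical blocks $K_{k,k-1-i}^{T}$ and $K_{k,k-1-j}$ really produce the two-point labelings $\hat p_{i,l}\leftrightarrow\{L_{i+1},L_{i+1+l}\}$ and $\hat L_{j,m}\leftrightarrow\{p_{j+1},p_{j+1+m}\}$, and that every index occurring ($j+1$, $j+1+l$, $j-i$, $j-i+l$) stays inside its block. The conceptual crux is recognizing that the trace hypothesis is exactly what supplies the \emph{second} guaranteed common line, namely the self-incident $\hat L_{i,\cdot}$; without all-ones diagonal only one common line ($L_{\cdot}$) is visible and the entry need not vanish.
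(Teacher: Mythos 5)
Your proof is correct and takes essentially the same route as the paper's: both arguments exhibit the same two forced common incidences between the point of the relevant block row and the common canonical point of the group-$j$ lines --- one incidence supplied by the canonical blocks, the other by the diagonal $1$ coming from the trace hypothesis --- and then invoke the scalar-product-two condition (\ref{incmatcond2}) to forbid a third common line, killing the whole block row at once. Your explicit labelings $\hat p_{i,l}\leftrightarrow\{L_{i+1},L_{i+1+l}\}$ and $\hat L_{j,m}\leftrightarrow\{p_{j+1},p_{j+1+m}\}$ are just a cleaner rendering of the paper's bookkeeping with the column position $p$ and row position $r$, so no substantive difference in approach remains.
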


\begin{proof}
According to the definition of $M$ (\ref{CanMat}), submatrices $K_{k,k-j}$ and $D^{i,j-1}$ start at the same column $p$ of $M$ and have the same number of columns. The number $p$ takes values 
$$k+1, 2k-1, 3k-4, 4k-8,\ldots$$ for $K_{k,2},K_{k,3},K_{k,4},K_{k,5}\ldots$, respectively. Apparently, constant terms in this sequence arise from the recurrence relation defined as  
\begin{eqnarray*}
a(n)&=&a(n-1)+(n+1),\\
a(0)&=&k+1.
\end{eqnarray*}
Furthermore, we have
$$
p=(j-1)k - \frac{j(j+1)}{2}-2.
$$
Similarly, it follows from (\ref{CanMat}) that the entry $D^{i,j-1}[j-i,1]$ is on the $r$-th row of $M$,
$$
r=ik-\frac{i(i+1)}{2}+j-i.
$$
Now we demonstrate that $j$-th and $r$-th row have two 1s in common in the positions $1,2, \ldots, p-1$. Obviously, both of these rows have 1s in the $j$-th position. In general, $j$-th row, $j \leq k$ has 1s in the positions $$1, j, j+k-2, j+2k-5, \ldots,(j-2)k-\frac{j(j-3)}{2}+1.$$
On the other hand, the $r$-th row has 1s in the $r$-th position (the main diagonal). The fact that this position is present in the above sequence completes the first statement of the proof. Similar reasoning proves the second statement. 
\end{proof}

Lemma \ref{Cor2} gives an immediate construction of a biplane of order 4 admitting an incidence matrix $M$ with all entries on the main diagonal equal to $1$. In the first step we form the first $k$ rows and the first $k$ columns. Once assuming that $tr(M)=16$, Lemma \ref{Cor2} defines positions of all zeros in submatrices $D^{i,j}$. Finally, when we put 1s on the remaining $4$ places in every row (column), $M$ meets conditions (\ref{incmatcond1})-(\ref{incmatcond2}). We obtained the biplane ${\cal B}_{4c}$ of order 4 with $|Aut({\cal B}_{4c})|=11520$. Figure \ref{Fig0} presents this biplane in a matrix form.

Biplanes of order $9$ were the subject of intensive research, mostly by algebraic approach \cite{EsCe,EsCe4,HLW, Marg}. Recent classification done in \cite{KO} confirmed that five already known biplanes are the only biplanes of this order. For the purpose to efficiently construct biplane of this order we associate block matrices as a {\it structural invariant} of a biplane. 

Let $P_n$ be an adjacency matrix of a {\it path} with $n$ vertices having starting and ending vertex having {\it loop}. Then $L_n:=P_n C_n^{-}$.  In other words,
$$
\qquad
L_{n}=\begin{pmatrix}
1 & 1 & &  &
\cr   1& & 1 & & 
\cr   &\ddots &  &\ddots & 
\cr   &   &1 & &1
\cr   &   && 1&1
\end{pmatrix}. 
$$

With this notation, for the submatrix $D^{1,2}$ of the only canonical incidence matrix of ${\cal B}_{4c}$ it holds true $D^{1,2}=A$,
$$
A:= \left( \begin{array}{c}
0_{1,3}\\
L_3  \end{array}  \right).
$$
This give us a hint for further invariants, within biplanes of higher orders. Using our own algorithm we performed constructions for biplanes of order $7$ and $9$. Recall that biplanes of order 7 were classified in work \cite{Sal2}. In both cases we set
\begin{eqnarray}
D^{1,1}=I_{k-2}. 
\end{eqnarray}
In case of biplanes with $37$ points the second associated invariant is $D^{1,2}=B$,
$$
B:= \left( \begin{array}{cc}
0_{1,6}\\
0_3 & L_3\\
L_3 & 0_3 \end{array}  \right).
$$
Similarly, in case of biplanes of order 9 we set $D^{1,2}=C$,
$$
C:= \left( \begin{array}{cc}
0_{1,8}\\
0_4 & L_4 C_4^{-}\\
L_4 & 0_4 \end{array}  \right).
$$

\begin{figure}[h!]
\begin{center}
\begin{scriptsize}
\begin{tabular}{|c@{\hspace{0.5em}}c@{\hspace{0.5em}}c@{\hspace{0.5em}}c@{\hspace{0.5em}}c@{\hspace{0.5em}}c@{\hspace{0.5em}}c@{\hspace{0.5em}}|c@{\hspace{0.5em}}c@{\hspace{0.5em}}c@{\hspace{0.5em}}c@{\hspace{0.5em}}c@{\hspace{0.5em}}c@{\hspace{0.5em}}|} \hline
1 & $\cdot$ & $\cdot$ & $\cdot$ & $\cdot$ & $\cdot$ & $\cdot$ & $\cdot$ &  $\cdot$ &  $\cdot$ &  $\cdot$ & $ \cdot$ &  $\cdot$  \\
$\cdot$ & 1 & $\cdot$ & $\cdot$& $\cdot$ & $\cdot$ & $\cdot$ & $\cdot$ & 1& $\cdot$ &  1 & $ \cdot$ &  $\cdot$  \\
$\cdot$ & $\cdot$ & 1 & $\cdot$& $\cdot$ & $\cdot$ & $\cdot$ & $\cdot$ & $\cdot$ & 1&  $\cdot$ & 1 &  $\cdot$ \\
$\cdot$ & $\cdot$ & $\cdot$ & 1 & $\cdot$ & $\cdot$ & $\cdot$ & 1 & $\cdot$ & $\cdot$ &  $\cdot$ & $\cdot$ & 1  \\
$\cdot$ & $\cdot$ & $\cdot$ & $\cdot$ & 1 & $\cdot$ & $\cdot$ & 1& $\cdot$ & $\cdot$ &  $\cdot$ & 1& $\cdot$  \\
$\cdot$ & $\cdot$ & $\cdot$ & $\cdot$ &  $\cdot$ & 1 & $\cdot$ & $\cdot$ & 1& $\cdot$ &  $\cdot$ & $\cdot$ & 1    \\ 
$\cdot$  & $\cdot$ & $\cdot$ & $\cdot$ &  $\cdot$ & $\cdot$  & 1 & $\cdot$ & $\cdot$ & 1  &  1 & $\cdot$ & $\cdot$   \\ \hline
\end{tabular}
\end{scriptsize}
\caption{Matrices $D^{1,1}$ and $D^{1,2}$ that were associated structural invariants for duals of biplanes of order 7.}\label{FigStrucIvariantForB7}
\end{center}
\end{figure}
\begin{figure}[h!]
\begin{center}
\begin{scriptsize}
\begin{tabular}{|c@{\hspace{0.5em}}c@{\hspace{0.5em}}c@{\hspace{0.5em}}c@{\hspace{0.5em}}c@{\hspace{0.5em}}c@{\hspace{0.5em}}c@{\hspace{0.5em}}c@{\hspace{0.5em}}c@{\hspace{0.5em}}|c@{\hspace{0.5em}}c@{\hspace{0.5em}}c@{\hspace{0.5em}}c@{\hspace{0.5em}}c@{\hspace{0.5em}}c@{\hspace{0.5em}}c@{\hspace{0.5em}}c@{\hspace{0.5em}}|} \hline
1 & $\cdot$ & $\cdot$ & $\cdot$ & $\cdot$ & $\cdot$ & $\cdot$ & $\cdot$ &  $\cdot$ &  $\cdot$ &  $\cdot$ & $ \cdot$ &  $\cdot$ & $\cdot$ & $\cdot$ & $\cdot$ & $\cdot$\\
$\cdot$ & 1 & $\cdot$ & $\cdot$& $\cdot$ & $\cdot$ & $\cdot$ & $\cdot$ & $\cdot$ & $\cdot$ &  1 & $ \cdot$ &  $\cdot$ & $\cdot$ & $\cdot$ & $\cdot$ & 1\\
$\cdot$ & $\cdot$ & 1 & $\cdot$& $\cdot$ & $\cdot$ & $\cdot$ & $\cdot$ & $\cdot$ & $\cdot$ &  $\cdot$ & 1 &  $\cdot$ & 1& $\cdot$ & $\cdot$ & $\cdot$\\
$\cdot$ & $\cdot$ & $\cdot$ & 1 & $\cdot$ & $\cdot$ & $\cdot$ & $\cdot$ & $\cdot$ & $\cdot$ &  $\cdot$ & $\cdot$ & 1 & $\cdot$ &  1  & $\cdot$ & $\cdot$\\
$\cdot$ & $\cdot$ & $\cdot$ & $\cdot$ & 1 & $\cdot$ & $\cdot$ & $\cdot$ & $\cdot$ & 1 &  $\cdot$ & $\cdot$ & $\cdot$ & $\cdot$ &  $\cdot$   & 1 & $\cdot$\\
$\cdot$ & $\cdot$ & $\cdot$ & $\cdot$ &  $\cdot$ & 1 & $\cdot$ & $\cdot$ & $\cdot$ & 1 &  $\cdot$ & $\cdot$ & $\cdot$ &    $\cdot$ & 1  & $\cdot$& $\cdot$\\ 
$\cdot$  & $\cdot$ & $\cdot$ & $\cdot$ &  $\cdot$ & $\cdot$  & 1 & $\cdot$ & $\cdot$ & $\cdot$  &  1 & $\cdot$ & $\cdot$  & $\cdot$ & $\cdot$  & 1 & $\cdot$\\
$\cdot$  & $\cdot$ &  $\cdot$  & $\cdot$ &  $\cdot$ & $\cdot$  & $\cdot$ & 1 &  $\cdot$ & $\cdot$  &  $\cdot$ & 1 & $\cdot$ & $\cdot$ & $\cdot$   & $\cdot$& 1\\
$\cdot$  & $\cdot$ &  $\cdot$  &  $\cdot$ & $\cdot$ &  $\cdot$  & $\cdot$ & $\cdot$ & 1 & $\cdot$  & $\cdot$ &  $\cdot$ & 1 & 1  & $\cdot$ & $\cdot$  &$\cdot$ \\ \hline
\end{tabular}
\end{scriptsize}
\caption{Matrices $D^{1,1}$ and $D^{1,2}$ that were associated structural invariants for ${\cal B}_{9c}$.}\label{FigStrucIvariantForB9c}
\end{center}
\end{figure}

It has proved that this approach is fruitful. Obtained results are presented in Table \ref{Tab1}. In case of biplanes of order 7, one of duals is constructed while in case of biplanes of order $9$ we find three structures, having automorphism group orders $64$, $288$ and $80640$. 

\begin{table}[h]
$$
\begin{array}{cccl}
n & D^{1,1} & D^{1,2} & |Aut({\cal B})|
\cr 4 & I_4 & A & 11520
\cr 7 & I_7 & B & 1512
\cr 9 & I_9 & C & 64, 288, 80640
\end{array}
$$
\caption{Constructions with associated structural invariants.}\label{Tab1}
\end{table}

It is worth mentioning that we also experiment with some other associated structural invariants. In particular, when associate a structural invariant for $D^{1,2}$ that is depicted in Figure \ref{FigStrucIvariantForB7} (with $\cdot$ instead of 0), in case of biplanes of order 7, we get both duals. 

When associate invariant for $D^{1,2}$ presented at Figure \ref{FigStrucIvariantForB9c} (with $\cdot$ instead of 0) in case of biplanes of order 8, we get the biplane ${\cal B}_{9c}$, having the automorphism group order $\vert Aut({\cal B}_{9c}) \vert=144$.

Figure \ref{Fig1} shows a constructed incidence matrix of the biplane ${\cal B}_{9e}$, without canonical rows and columns (1s are stated, all other entries are 0).

\begin{figure}[h!]
\begin{center}
\begin{tiny}
\begin{tabular}{|c@{\hspace{0.4em}}c@{\hspace{0.4em}}c@{\hspace{0.4em}}c@{\hspace{0.4em}}c@{\hspace{0.4em}}c@{\hspace{0.4em}}c@{\hspace{0.4em}}c@{\hspace{0.4em}}c@{\hspace{0.4em}}|c@{\hspace{0.4em}}c@{\hspace{0.4em}}c@{\hspace{0.4em}}c@{\hspace{0.4em}}c@{\hspace{0.4em}}c@{\hspace{0.4em}}c@{\hspace{0.4em}}c@{\hspace{0.4em}}|c@{\hspace{0.4em}}c@{\hspace{0.4em}}c@{\hspace{0.4em}}c@{\hspace{0.4em}}c@{\hspace{0.4em}}c@{\hspace{0.4em}}c@{\hspace{0.4em}}|c@{\hspace{0.4em}}c@{\hspace{0.4em}}c@{\hspace{0.4em}}c@{\hspace{0.4em}}c@{\hspace{0.4em}}c@{\hspace{0.4em}}|c@{\hspace{0.4em}}c@{\hspace{0.4em}}c@{\hspace{0.4em}}c@{\hspace{0.4em}}c@{\hspace{0.4em}}|c@{\hspace{0.4em}}c@{\hspace{0.4em}}c@{\hspace{0.4em}}c@{\hspace{0.4em}}|c@{\hspace{0.4em}}c@{\hspace{0.4em}}c@{\hspace{0.4em}}|c@{\hspace{0.4em}}c@{\hspace{0.4em}}|c@{\hspace{0.4em}}|} \hline
1&$\cdot$ &$\cdot$ &$\cdot$ &$\cdot$&$\cdot$ &$\cdot$ &$\cdot$ &$\cdot$ &$\cdot$ &$\cdot$ &$\cdot$ &$\cdot$ &$\cdot$	 &$\cdot$ &$\cdot$ &$\cdot$ &1&	1&$\cdot$ &$\cdot$ &$\cdot$ &$\cdot$ &$\cdot$ &	$\cdot$ &	1&$\cdot$ &$\cdot$ &$\cdot$ &$\cdot$ &	1&$\cdot$ &$\cdot$ &$\cdot$ &$\cdot$ &$\cdot$ &	$\cdot$&	$\cdot$ &$\cdot$ &	1&	1&$\cdot$&$\cdot$ &1&	1\\
$\cdot$&1& $\cdot$&$\cdot$ &$\cdot$&$\cdot$ &$\cdot$&$\cdot$&$\cdot$&$\cdot$&$\cdot$ &$\cdot$&$\cdot$ &$\cdot$ & $\cdot$&	1&	1&$\cdot$ &$\cdot$ &$\cdot$ &$\cdot$ &$\cdot$ &$\cdot$ &$\cdot$ &	1&$\cdot$ &$\cdot$ &	1&$\cdot$ &$\cdot$ &$\cdot$ &	1&$\cdot$ &$\cdot$ &$\cdot$ &$\cdot$ &$\cdot$ &	1&	1&	1&$\cdot$ &$\cdot$ &$\cdot$ &$\cdot$ &$\cdot$ \\
$\cdot$ &$\cdot$	 &	1&	$\cdot$ &$\cdot$	 &$\cdot$	 &$\cdot$	 &$\cdot$	 &$\cdot$	 &$\cdot$	 &$\cdot$	 &$\cdot$	 &$\cdot$	 &$\cdot$	 &	1&$\cdot$	 &	1&$\cdot$ &$\cdot$ &	1&$\cdot$ &$\cdot$ &	1&$\cdot$ &$\cdot$ &$\cdot$ &$\cdot$ &$\cdot$ &$\cdot$ &$\cdot$ &$\cdot$ &$\cdot$ &	1&$\cdot$ &	1&	1&$\cdot$ &$\cdot$ &	$\cdot$ &$\cdot$ &	1&$\cdot$ &$\cdot$ &$\cdot$ &$\cdot$ \\
$\cdot$ &$\cdot$	 &$\cdot$	 &	1&$\cdot$	 &$\cdot$	 &$\cdot$	 &$\cdot$	 &$\cdot$	 &$\cdot$	 &$\cdot$	 &$\cdot$	 &$\cdot$	 &	1&$\cdot$	 &	1&$\cdot$	 &$\cdot$ &$\cdot$ &	1&$\cdot$ &$\cdot$ &$\cdot$ &	1&$\cdot$ &$\cdot$ &	1&$\cdot$ &	1&$\cdot$ &$\cdot$ &$\cdot$ &$\cdot$ &$\cdot$ &$\cdot$ &$\cdot$ &	1&$\cdot$ &$\cdot$ &$\cdot$ &$\cdot$ &$\cdot$ &$\cdot$ &	1&$\cdot$ \\
$\cdot$ &	$\cdot$ &$\cdot$	 &$\cdot$	 &	1&$\cdot$	 &$\cdot$	 &$\cdot$	 &$\cdot$	 &$\cdot$	 &	$\cdot$ &$\cdot$	 &	$\cdot$ &	1&	1&	$\cdot$ &	$\cdot$ &$\cdot$ &$\cdot$ &$\cdot$ &	1&	1&$\cdot$ &$\cdot$ &	1&$\cdot$ &$\cdot$ &$\cdot$ &$\cdot$ &	1&$\cdot$ &$\cdot$ &$\cdot$ &	1&$\cdot$ &$\cdot$ &$\cdot$ &$\cdot$ &$\cdot$ &$\cdot$ &$\cdot$ &$\cdot$ &$\cdot$ &$\cdot$ &	1\\
$\cdot$ &	$\cdot$ &$\cdot$	 &$\cdot$	 &$\cdot$	 &	1&$\cdot$	 &$\cdot$	 &$\cdot$	 &	1&	1&$\cdot$	 &$\cdot$	 &$\cdot$	 &	$\cdot$ &$\cdot$	 &$\cdot$	 &$\cdot$ &$\cdot$ &$\cdot$ &$\cdot$ &$\cdot$ &$\cdot$ &	1&$\cdot$ &$\cdot$ &$\cdot$ &$\cdot$ &$\cdot$ &	1&	1&$\cdot$ &	1&$\cdot$ &$\cdot$ &$\cdot$ &$\cdot$ &	1&$\cdot$ &$\cdot$ &$\cdot$ &$\cdot$ &	1&$\cdot$ &$\cdot$ \\
$\cdot$ &$\cdot$	 &$\cdot$	 &$\cdot$	 &$\cdot$	 &$\cdot$	 &	1&$\cdot$	 &$\cdot$	 &	1&$\cdot$	 &	1&$\cdot$	 &$\cdot$	 &$\cdot$	 &$\cdot$	 &$\cdot$	 &$\cdot$ &$\cdot$ &$\cdot$ &$\cdot$ &$\cdot$ &	1&$\cdot$ &$\cdot$ &	1&	1&$\cdot$ &$\cdot$ &$\cdot$ &$\cdot$ &$\cdot$ &$\cdot$ &	1&$\cdot$ &$\cdot$ &$\cdot$ &$\cdot$ &	1&$\cdot$ &$\cdot$ &	1&$\cdot$ &$\cdot$ &$\cdot$ \\
 $\cdot$&$\cdot$	 &$\cdot$	 &$\cdot$	 &$\cdot$	 &$\cdot$	 &$\cdot$	 &	1&$\cdot$	 &$\cdot$	 &	1&$\cdot$	 &	1&$\cdot$	 &	$\cdot$ &$\cdot$	 &$\cdot$	 &$\cdot$ &	1&$\cdot$ &	1&$\cdot$ &$\cdot$ &$\cdot$ &$\cdot$ &$\cdot$ &$\cdot$ &	1&$\cdot$ &$\cdot$ &$\cdot$ &$\cdot$ &$\cdot$ &$\cdot$ &	1&$\cdot$ &	1&$\cdot$ &$\cdot$ &$\cdot$ &$\cdot$ &	1&$\cdot$ &$\cdot$ &$\cdot$ \\
$\cdot$ &$\cdot$	 &$\cdot$	 &$\cdot$	 &$\cdot$	 &$\cdot$	 &$\cdot$	 &$\cdot$	 &	1&$\cdot$	 &$\cdot$	 &	1&	1&	$\cdot$ &$\cdot$	 &$\cdot$	 &$\cdot$	 &	1&$\cdot$ &$\cdot$ &$\cdot$ &	1&$\cdot$ &$\cdot$ &$\cdot$ &$\cdot$ &$\cdot$ &$\cdot$ &	1&$\cdot$ &$\cdot$ &	1&$\cdot$ &$\cdot$ &$\cdot$ &	1&$\cdot$ &$\cdot$ &$\cdot$ &$\cdot$ &$\cdot$ &$\cdot$ &	1&$\cdot$ &$\cdot$ \\ \hline
$\cdot$ &$\cdot$	 &$\cdot$	 &$\cdot$	 &$\cdot$	 &	1&	1&	$\cdot$ &	$\cdot$ &	1&$\cdot$	 &$\cdot$	 &$\cdot$	 &$\cdot$	 &$\cdot$	 &$\cdot$	 &$\cdot$	 &$\cdot$ &$\cdot$ &$\cdot$ &$\cdot$ &$\cdot$ &$\cdot$ &$\cdot$ &	1&$\cdot$ &$\cdot$ &$\cdot$ &	1&$\cdot$ &$\cdot$ &$\cdot$ &$\cdot$ &$\cdot$ &	1&	1&	1&$\cdot$ &$\cdot$ &$\cdot$ &$\cdot$ &$\cdot$ &$\cdot$ &$\cdot$ &	1\\
$\cdot$ &	$\cdot$ &$\cdot$	 &$\cdot$	 &$\cdot$	 &	1&$\cdot$	 &	1&$\cdot$	 &$\cdot$	 &	1&$\cdot$	 &$\cdot$	 &$\cdot$	 &$\cdot$	 &$\cdot$	 &$\cdot$	 &$\cdot$ &$\cdot$ &	1&$\cdot$ &	1&$\cdot$ &$\cdot$ &$\cdot$ &$\cdot$ &$\cdot$ &$\cdot$ &$\cdot$ &$\cdot$ &$\cdot$ &	1&$\cdot$ &	1&$\cdot$ &$\cdot$ &$\cdot$ &$\cdot$ &	1&$\cdot$ &$\cdot$ &$\cdot$ &$\cdot$ &	1&$\cdot$ \\
$\cdot$ &$\cdot$	 &$\cdot$	 &$\cdot$	 &$\cdot$	 &$\cdot$	 &	1&$\cdot$	 &	1&$\cdot$	 &$\cdot$	 &	1&	$\cdot$ &$\cdot$	 &$\cdot$	 &$\cdot$	 &$\cdot$	 &$\cdot$ &$\cdot$ &	1&	1&$\cdot$ &$\cdot$ &$\cdot$ &$\cdot$ &$\cdot$ &$\cdot$ &	1&$\cdot$ &	1&$\cdot$ &$\cdot$ &$\cdot$ &$\cdot$ &$\cdot$ &$\cdot$ &$\cdot$ &	1&$\cdot$ &$\cdot$ &	1&$\cdot$ &$\cdot$ &$\cdot$ &$\cdot$ \\
$\cdot$ &	$\cdot$ &	$\cdot$ &$\cdot$	 &$\cdot$	 &$\cdot$	 &$\cdot$	 &	1&	1&$\cdot$	 &$\cdot$	 &$\cdot$	 &	1&$\cdot$	 &	$\cdot$ &	$\cdot$ &$\cdot$	 &$\cdot$ &$\cdot$ &$\cdot$ &$\cdot$ &$\cdot$ &	1&	1&	1&$\cdot$ &	1&$\cdot$ &$\cdot$ &$\cdot$ &$\cdot$ &$\cdot$ &	1&$\cdot$ &$\cdot$ &$\cdot$ &$\cdot$ &$\cdot$ &	 &	1&$\cdot$ &$\cdot$ &$\cdot$ &$\cdot$ &$\cdot$ \\ 
$\cdot$ &$\cdot$	 &$\cdot$	 &	1&	1&$\cdot$	 &$\cdot$	 &$\cdot$	 &$\cdot$	 &$\cdot$	 &$\cdot$	 &	$\cdot$ &$\cdot$	 &	1&$\cdot$	 &$\cdot$	 &	$\cdot$ &	1&$\cdot$ &$\cdot$ &$\cdot$ &$\cdot$ &	1&$\cdot$ &$\cdot$ &$\cdot$ &$\cdot$ &	1&$\cdot$ &$\cdot$ &$\cdot$ &$\cdot$ &$\cdot$ &$\cdot$ &	1&$\cdot$ &$\cdot$ &$\cdot$ &	1&$\cdot$ &$\cdot$ &$\cdot$ &	1&$\cdot$ &$\cdot$ \\
$\cdot$ &$\cdot$	 &	1&$\cdot$	 &	1&$\cdot$	 &$\cdot$	 &$\cdot$	 &$\cdot$	 &$\cdot$	 &$\cdot$	 &$\cdot$	 &$\cdot$	 &$\cdot$	 &	1&	$\cdot$ &$\cdot$	 &$\cdot$ &	1&$\cdot$ &$\cdot$ &$\cdot$ &$\cdot$ &	1&$\cdot$ &$\cdot$ &$\cdot$ &$\cdot$ &	1&$\cdot$ &$\cdot$ &	1&$\cdot$ &$\cdot$ &$\cdot$ &$\cdot$ &$\cdot$ &	1&$\cdot$ &$\cdot$ &$\cdot$ &	1&$\cdot$ &$\cdot$ &$\cdot$ \\
$\cdot$ &	1&$\cdot$	 &	1&$\cdot$	 &$\cdot$	 &$\cdot$	 &$\cdot$	 &$\cdot$	 &$\cdot$	 &	$\cdot$ &	$\cdot$ &$\cdot$	 &$\cdot$	 &$\cdot$	 &	1&$\cdot$	 &$\cdot$ &$\cdot$ &$\cdot$ &$\cdot$ &	1&$\cdot$ &$\cdot$ &$\cdot$ &	1&$\cdot$ &$\cdot$ &$\cdot$ &	1&$\cdot$ &$\cdot$ &	1&$\cdot$ &$\cdot$ &	1&$\cdot$ &$\cdot$ &	 &$\cdot$ &$\cdot$ &	1&$\cdot$ &$\cdot$ &$\cdot$ \\
$\cdot$ &	1&	1&$\cdot$	 &$\cdot$	 &$\cdot$	 &$\cdot$	 &$\cdot$	 &$\cdot$	 &$\cdot$	 &$\cdot$	 &$\cdot$	 &$\cdot$	 &$\cdot$	 &$\cdot$	 &$\cdot$	 &	1&$\cdot$ &$\cdot$ &$\cdot$ &	1&$\cdot$ &$\cdot$ &$\cdot$ &$\cdot$ &$\cdot$ &	1&$\cdot$&$\cdot$ &$\cdot$ &	1&$\cdot$ &$\cdot$ &	1&$\cdot$ &$\cdot$ &	1&$\cdot$ &$\cdot$ &$\cdot$ &$\cdot$ &$\cdot$ &	1&$\cdot$ &$\cdot$ \\ \hline
1&$\cdot$ &$\cdot$ &$\cdot$ &$\cdot$ &$\cdot$ &$\cdot$ &$\cdot$ &	1&$\cdot$ &$\cdot$ &$\cdot$ &$\cdot$ &	1&$\cdot$ &$\cdot$ &$\cdot$ &	1&$\cdot$ &$\cdot$ &$\cdot$ &$\cdot$ &$\cdot$ &$\cdot$ &$\cdot$ &$\cdot$ &$\cdot$ &$\cdot$ &$\cdot$ &$\cdot$	 &$\cdot$ &$\cdot$ &	1&	1&$\cdot$ &$\cdot$ &	1&	1&$\cdot$ &$\cdot$ &$\cdot$ &	1&$\cdot$ &$\cdot$ &$\cdot$ \\
1&$\cdot$ &$\cdot$ &$\cdot$ &$\cdot$ &$\cdot$ &$\cdot$ &	1&$\cdot$ &$\cdot$ &$\cdot$ &$\cdot$ &$\cdot$ &$\cdot$ &	1&$\cdot$ &$\cdot$ &$\cdot$ &	1&$\cdot$ &$\cdot$ &$\cdot$ &$\cdot$ &$\cdot$ &$\cdot$ &$\cdot$ &	1&$\cdot$ &$\cdot$ &	1&$\cdot$ &$\cdot$ &$\cdot$ &$\cdot$ &$\cdot$ &	1&$\cdot$ &$\cdot$ &	1&$\cdot$ &$\cdot$ &$\cdot$ &	1&$\cdot$ &$\cdot$ \\ 
$\cdot$ &$\cdot$ &	1&	1&$\cdot$ &$\cdot$ &$\cdot$ &$\cdot$ &$\cdot$ &$\cdot$ &	1&	1&$\cdot$ &$\cdot$ &$\cdot$ &$\cdot$ &$\cdot$ &$\cdot$ &$\cdot$ &	1&$\cdot$ &$\cdot$ &$\cdot$ &$\cdot$ &$\cdot$ &$\cdot$ &$\cdot$ &$\cdot$ &$\cdot$ &$\cdot$ &$\cdot$ &$\cdot$ &$\cdot$ &$\cdot$ &$\cdot$ &$\cdot$ &$\cdot$ &$\cdot$ &$\cdot$ &	1&$\cdot$ &	1&	1&$\cdot$ &	1\\ 
$\cdot$ &$\cdot$ &$\cdot$ &$\cdot$ &	1&$\cdot$ &$\cdot$ &	1&$\cdot$ &$\cdot$ &$\cdot$ &	1&$\cdot$ &$\cdot$ &$\cdot$ &$\cdot$ &	1&$\cdot$ &$\cdot$ &$\cdot$ &	1&$\cdot$ &$\cdot$ &$\cdot$ &$\cdot$ &	1&$\cdot$ &$\cdot$ &	1&$\cdot$ &$\cdot$ &$\cdot$ &	1&$\cdot$ &$\cdot$ &$\cdot$ &$\cdot$ &$\cdot$ &$\cdot$ &$\cdot$ &$\cdot$ &$\cdot$ &$\cdot$ &	1&$\cdot$ \\
$\cdot$ &$\cdot$ &$\cdot$ &$\cdot$ &	1&$\cdot$ &	 &$\cdot$ &	1&$\cdot$ &	1&$\cdot$ &$\cdot$ &$\cdot$ &$\cdot$ &	1&$\cdot$ &$\cdot$ &$\cdot$ &$\cdot$ &$\cdot$ &	1&$\cdot$ &$\cdot$ &$\cdot$ &$\cdot$ &	1&$\cdot$ &$\cdot$ &$\cdot$ &	1&$\cdot$ &$\cdot$ &$\cdot$ &	1&$\cdot$ &$\cdot$ &$\cdot$ &$\cdot$ &$\cdot$ &	1&$\cdot$ &$\cdot$ &$\cdot$ &$\cdot$ \\
$\cdot$ &$\cdot$ &	1&$\cdot$ &$\cdot$ &$\cdot$ &	1&$\cdot$ &$\cdot$ &$\cdot$ &$\cdot$ &$\cdot$ &	1&	1&$\cdot$ &$\cdot$ &$\cdot$ &$\cdot$ &$\cdot$ &$\cdot$ &$\cdot$ &$\cdot$ &	1&$\cdot$ &$\cdot$ &$\cdot$ &$\cdot$ &$\cdot$ &$\cdot$ &	1&	1&	1&$\cdot$ &$\cdot$ &$\cdot$ &$\cdot$ &$\cdot$ &$\cdot$ &$\cdot$ &$\cdot$ &$\cdot$ &$\cdot$ &$\cdot$ &	1&$\cdot$ \\
$\cdot$ &$\cdot$ &$\cdot$ &	1&$\cdot$ &	1&$\cdot$ &$\cdot$ &$\cdot$ &$\cdot$ &$\cdot$ &$\cdot$ &	1&$\cdot$ &	1&$\cdot$ &$\cdot$ &$\cdot$ &$\cdot$ &$\cdot$ &$\cdot$ &$\cdot$ &$\cdot$ &	1&$\cdot$ &	1&$\cdot$ &	1&$\cdot$ &$\cdot$ &$\cdot$ &$\cdot$ &$\cdot$ &	1&$\cdot$ &$\cdot$ &$\cdot$ &$\cdot$ &$\cdot$ &$\cdot$ &	1&$\cdot$ &$\cdot$ &$\cdot$ &$\cdot$ \\ \hline
$\cdot$ &	1&$\cdot$ &$\cdot$ &	1&$\cdot$ &$\cdot$ &$\cdot$ &$\cdot$ &	1&$\cdot$ &$\cdot$ &	1&$\cdot$ &$\cdot$ &$\cdot$ &$\cdot$ &$\cdot$ &$\cdot$ &$\cdot$ &$\cdot$ &$\cdot$ &$\cdot$ &$\cdot$ &	1&$\cdot$ &$\cdot$ &$\cdot$ &$\cdot$ &$\cdot$ &$\cdot$ &$\cdot$ &$\cdot$ &$\cdot$ &$\cdot$ &$\cdot$ &$\cdot$ &$\cdot$ &$\cdot$ &$\cdot$ &	1&	1&	1&	1&$\cdot$ \\ 
1&$\cdot$ &$\cdot$ &$\cdot$ &$\cdot$ &$\cdot$ &	1&$\cdot$ &$\cdot$ &$\cdot$ &$\cdot$ &$\cdot$ &$\cdot$ &$\cdot$ &$\cdot$ &	1&$\cdot$ &$\cdot$ &$\cdot$ &$\cdot$ &	1&$\cdot$ &$\cdot$ &	1&$\cdot$ &	1&$\cdot$ &$\cdot$ &$\cdot$ &$\cdot$ &$\cdot$ &	1&$\cdot$ &$\cdot$ &	1&$\cdot$ &$\cdot$ &$\cdot$ &$\cdot$ &$\cdot$ &$\cdot$ &$\cdot$ &	1&$\cdot$ &$\cdot$ \\
$\cdot$ &$\cdot$ &$\cdot$ &	1&$\cdot$ &$\cdot$ &	1&$\cdot$ &$\cdot$ &$\cdot$ &$\cdot$ &$\cdot$ &	1&$\cdot$ &$\cdot$ &$\cdot$ &	1&$\cdot$ &	1&$\cdot$ &$\cdot$ &	1&$\cdot$ &$\cdot$ &$\cdot$ &$\cdot$ &	1&$\cdot$ &$\cdot$ &$\cdot$ &$\cdot$ &$\cdot$ &$\cdot$ &$\cdot$ &$\cdot$ &$\cdot$ &$\cdot$ &	1&$\cdot$ &$\cdot$ &$\cdot$ &$\cdot$ &$\cdot$ &$\cdot$ &	1\\
$\cdot$ &	1&$\cdot$ &$\cdot$ &$\cdot$ &$\cdot$ &$\cdot$ &	1&$\cdot$ &$\cdot$ &$\cdot$ &	1&$\cdot$ &	1&$\cdot$ &$\cdot$ &$\cdot$ &$\cdot$ &$\cdot$ &$\cdot$ &$\cdot$ &$\cdot$ &$\cdot$ &	1&$\cdot$ &$\cdot$ &$\cdot$ &	1&$\cdot$ &$\cdot$ &	1&$\cdot$ &$\cdot$ &$\cdot$ &$\cdot$ &	1&$\cdot$ &$\cdot$ &$\cdot$ &$\cdot$ &$\cdot$ &$\cdot$ &$\cdot$ &$\cdot$ &	1\\
$\cdot$ &$\cdot$ &$\cdot$ &	1&$\cdot$ &$\cdot$ &$\cdot$ &$\cdot$ &	1&	1&$\cdot$ &$\cdot$ &$\cdot$ &$\cdot$ &	1&$\cdot$ &$\cdot$ &$\cdot$ &$\cdot$ &$\cdot$ &	1&$\cdot$ &$\cdot$ &$\cdot$ &$\cdot$ &$\cdot$ &$\cdot$ &$\cdot$ &	1&$\cdot$ &	1&$\cdot$ &$\cdot$ &$\cdot$ &$\cdot$ &$\cdot$ &$\cdot$ &$\cdot$ &	1&	1&$\cdot$ &$\cdot$ &$\cdot$ &$\cdot$ &$\cdot$ \\
$\cdot$ &$\cdot$ &$\cdot$ &$\cdot$ &	1&	1&$\cdot$ &$\cdot$ &$\cdot$ &$\cdot$ &$\cdot$ &	1&$\cdot$ &$\cdot$ &$\cdot$ &	1&$\cdot$ &$\cdot$ &	1&$\cdot$ &$\cdot$ &$\cdot$ &	1&$\cdot$ &$\cdot$ &$\cdot$ &$\cdot$ &$\cdot$ &$\cdot$ &	1&$\cdot$ &$\cdot$ &$\cdot$ &$\cdot$ &$\cdot$ &$\cdot$ &	1&$\cdot$ &$\cdot$ &	1&$\cdot$ &$\cdot$ &$\cdot$ &$\cdot$ &$\cdot$ \\ \hline
1&$\cdot$ &$\cdot$ &$\cdot$ &$\cdot$ &	1&$\cdot$ &$\cdot$ &$\cdot$ &$\cdot$ &$\cdot$ &$\cdot$ &$\cdot$ &$\cdot$ &$\cdot$ &$\cdot$ &	1&$\cdot$ &$\cdot$ &$\cdot$ &$\cdot$ &	1&	1&$\cdot$ &$\cdot$ &$\cdot$ &$\cdot$ &	1&	1&$\cdot$ &	1&$\cdot$  &$\cdot$ &$\cdot$  & $\cdot$ & $\cdot$ &$\cdot$  &$\cdot$  &$\cdot$ &$\cdot$ &$\cdot$ &1&$\cdot$ &$\cdot$ &$\cdot$ \\
$\cdot$ &	1&$\cdot$ &$\cdot$ &$\cdot$ &$\cdot$ &$\cdot$ &$\cdot$ &	1&$\cdot$ &	1&$\cdot$ &$\cdot$ &$\cdot$ &	1&$\cdot$ &$\cdot$ &$\cdot$ &$\cdot$ &$\cdot$ &$\cdot$ &$\cdot$ &	1&$\cdot$ &$\cdot$ &	1&$\cdot$ &$\cdot$ &$\cdot$ &$\cdot$ &$\cdot$  &	1&$\cdot$  & $\cdot$ & $\cdot$ &$\cdot$ &	1&$\cdot$ & $\cdot$ &$\cdot$ &$\cdot$ &$\cdot$ &$\cdot$ &$\cdot$ &	1\\
$\cdot$&$\cdot$ &	1&$\cdot$ &$\cdot$ &	1&$\cdot$ &$\cdot$ &$\cdot$ &$\cdot$ &$\cdot$ &$\cdot$ &	1&$\cdot$ &$\cdot$ &	1&$\cdot$ &	1&$\cdot$ &$\cdot$ &	1&$\cdot$ &$\cdot$ &$\cdot$ &$\cdot$ &$\cdot$ &$\cdot$ &$\cdot$ &$\cdot$ &$\cdot$ &$\cdot$ &$\cdot$  &	1&$\cdot$  & $\cdot$ &$\cdot$ &$\cdot$ &$\cdot$  &	1&$\cdot$ &$\cdot$ &$\cdot$ &$\cdot$ &$\cdot$ &	1\\
$\cdot$&$\cdot$ &$\cdot$ &$\cdot$ &	1&$\cdot$ &	1&$\cdot$ &$\cdot$ &$\cdot$ &	1&$\cdot$ &$\cdot$ &$\cdot$ &$\cdot$ &$\cdot$ &	1&	1&$\cdot$ &$\cdot$ &$\cdot$ &$\cdot$ &$\cdot$ &	1&$\cdot$ &$\cdot$ &$\cdot$ &$\cdot$ &$\cdot$ &$\cdot$ &$\cdot$  & $\cdot$ & $\cdot$ &	1&$\cdot$ &	1&$\cdot$  & $\cdot$ &$\cdot$  &	1&$\cdot$ &$\cdot$ &$\cdot$ &$\cdot$ &$\cdot$ \\
$\cdot$&$\cdot$ &	1&$\cdot$ &$\cdot$ &$\cdot$ &$\cdot$ &	1&$\cdot$ &	1&$\cdot$ &$\cdot$ &$\cdot$ &	1&$\cdot$ &$\cdot$ &$\cdot$ &$\cdot$ &$\cdot$ &$\cdot$ &$\cdot$ &	1&$\cdot$ &$\cdot$ &$\cdot$ &	1&$\cdot$ &$\cdot$ &$\cdot$ &$\cdot$ & $\cdot$ & $\cdot$ &$\cdot$  &$\cdot$ &	1& $\cdot$ &$\cdot$  &	1&$\cdot$ &	1&$\cdot$ &$\cdot$ &$\cdot$ &$\cdot$ &$\cdot$ \\ \hline
 $\cdot$&$\cdot$ &	1&$\cdot$ &$\cdot$ &$\cdot$ &$\cdot$ &$\cdot$ &	1&	1&$\cdot$ &$\cdot$ &$\cdot$ &$\cdot$ &$\cdot$ &	1&$\cdot$ &$\cdot$ &	1&$\cdot$ &$\cdot$ &$\cdot$ &$\cdot$ &$\cdot$ &$\cdot$ &$\cdot$ &$\cdot$ &	1&$\cdot$ &$\cdot$ &$\cdot$ &$\cdot$  & $\cdot$ &	1&$\cdot$  &	1&$\cdot$ &$\cdot$ &$\cdot$  &$\cdot$ &$\cdot$ &$\cdot$ &$\cdot$ &	1&$\cdot$ \\
$\cdot$ &$\cdot$ &$\cdot$ &	1&$\cdot$ &$\cdot$ &$\cdot$ &	1&$\cdot$ &	1&$\cdot$ &$\cdot$ &$\cdot$ &$\cdot$ &$\cdot$ &$\cdot$ &1&1&$\cdot$ &$\cdot$ &$\cdot$ &$\cdot$ &$\cdot$ &$\cdot$ &$\cdot$ &$\cdot$ &$\cdot$ &$\cdot$ &$\cdot$ &	1& $\cdot$ &	1&$\cdot$ &$\cdot$  &$\cdot$  & $\cdot$ &	1& $\cdot$ &$\cdot$  &$\cdot$ &	1&$\cdot$ &$\cdot$ &$\cdot$ &$\cdot$ \\
$\cdot$ &	1&$\cdot$ &$\cdot$ &$\cdot$ &	1&$\cdot$ &$\cdot$ &$\cdot$ &$\cdot$ &$\cdot$ &	1&$\cdot$ &$\cdot$ &	1&$\cdot$ &$\cdot$ &	1&$\cdot$ &$\cdot$ &$\cdot$ &$\cdot$ &$\cdot$ &$\cdot$ &$\cdot$ &$\cdot$ &	1&$\cdot$ &$\cdot$ &$\cdot$ & $\cdot$ &$\cdot$ &$\cdot$  &$\cdot$ &	1&$\cdot$  & $\cdot$ &	1&$\cdot$  &$\cdot$ &$\cdot$ &$\cdot$ &$\cdot$ &	1&$\cdot$ \\
$\cdot$&	1&$\cdot$ &$\cdot$ &$\cdot$ &$\cdot$ &	1&$\cdot$ &$\cdot$ &$\cdot$ &	1&$\cdot$ &$\cdot$ &	1&$\cdot$ &$\cdot$ &$\cdot$ &$\cdot$ &	1&$\cdot$ &$\cdot$ &$\cdot$ &$\cdot$ &$\cdot$ &$\cdot$ &$\cdot$ &$\cdot$ &$\cdot$ &	1&$\cdot$ & $\cdot$ &$\cdot$  &	1&$\cdot$  &$\cdot$  &$\cdot$  &$\cdot$  &$\cdot$  &	1&$\cdot$ &	1&$\cdot$ &$\cdot$ &$\cdot$ &$\cdot$ \\ \hline
1&	1&$\cdot$ &$\cdot$ &$\cdot$ &$\cdot$ &$\cdot$ &$\cdot$ &$\cdot$ &$\cdot$ &$\cdot$ &$\cdot$ &	1&$\cdot$ &$\cdot$ &$\cdot$ &$\cdot$ &$\cdot$ &$\cdot$ &	1&$\cdot$ &$\cdot$ &$\cdot$ &$\cdot$ &$\cdot$ &$\cdot$ &$\cdot$ &$\cdot$ &	1&	1&$\cdot$ &$\cdot$ &$\cdot$ &	1&	1&$\cdot$ &$\cdot$ &$\cdot$ &$\cdot$	 &1&$\cdot$ &$\cdot$ &$\cdot$ &$\cdot$ &$\cdot$	 \\
1&$\cdot$ &	1&$\cdot$ &$\cdot$ &$\cdot$ &$\cdot$ &$\cdot$ &$\cdot$ &$\cdot$ &$\cdot$ &	1&$\cdot$ &$\cdot$ &$\cdot$ &$\cdot$ &$\cdot$ &$\cdot$ &$\cdot$ &$\cdot$ &$\cdot$ &	1&$\cdot$ &	1&	1&$\cdot$ &$\cdot$ &$\cdot$ &$\cdot$ &$\cdot$ &$\cdot$ &$\cdot$ &$\cdot$ &$\cdot$ &$\cdot$ &$\cdot$ &	1&$\cdot$ &	1&$\cdot$ &	1&$\cdot$ &$\cdot$ &$\cdot$ &$\cdot$	 \\
$\cdot$ &$\cdot$ &$\cdot$ &$\cdot$ &$\cdot$ &$\cdot$ &	1&	1&$\cdot$ &$\cdot$ &$\cdot$ &$\cdot$ &$\cdot$ &$\cdot$ &	1&	1&$\cdot$ &	1&$\cdot$ &	1&$\cdot$ &$\cdot$ &$\cdot$ &$\cdot$ &	1&$\cdot$ &$\cdot$ &$\cdot$ &$\cdot$ &$\cdot$ &	1&$\cdot$ &$\cdot$ &$\cdot$ &$\cdot$ &$\cdot$ &$\cdot$ &$\cdot$ &$\cdot$ &$\cdot$ &$\cdot$ &	1&$\cdot$ &$\cdot$ &$\cdot$	 \\ \hline
$\cdot$ &$\cdot$ &$\cdot$ &$\cdot$ &$\cdot$ &	1&$\cdot$ &$\cdot$ &	1&$\cdot$ &$\cdot$ &$\cdot$ &$\cdot$ &	1&$\cdot$ &$\cdot$ &	1&$\cdot$ &	1&	1&$\cdot$ &$\cdot$ &$\cdot$ &$\cdot$ &	1&	1&$\cdot$ &$\cdot$ &$\cdot$ &$\cdot$ &$\cdot$ &$\cdot$ &$\cdot$ &$\cdot$ &$\cdot$ &$\cdot$ &$\cdot$ &$\cdot$ &$\cdot$	 &$\cdot$ &$\cdot$ &$\cdot$	 &	1&$\cdot$	 &$\cdot$	 \\
1&	 &$\cdot$ &	1&$\cdot$ &$\cdot$ &$\cdot$ &$\cdot$ &$\cdot$ &$\cdot$ &	1&$\cdot$ &$\cdot$ &$\cdot$ &$\cdot$ &$\cdot$ &$\cdot$ &$\cdot$ &$\cdot$ &$\cdot$ &	1&$\cdot$ &	1&$\cdot$ &	1&$\cdot$ &$\cdot$ &$\cdot$ &$\cdot$ &$\cdot$ &$\cdot$ &$\cdot$ &$\cdot$ &$\cdot$ &$\cdot$ &	1&$\cdot$ &	1&$\cdot$ &$\cdot$ &$\cdot$ &$\cdot$ &$\cdot$ &	1&$\cdot$	 \\ \hline
1&$\cdot$ &$\cdot$ &$\cdot$ &	1&$\cdot$ &$\cdot$ &$\cdot$ &$\cdot$ &	1&$\cdot$ &$\cdot$ &$\cdot$ &$\cdot$ &$\cdot$ &$\cdot$ &$\cdot$ &$\cdot$ &$\cdot$ &	1&$\cdot$ &$\cdot$ &$\cdot$ &$\cdot$ &$\cdot$ &$\cdot$ &	1&	1&$\cdot$ &$\cdot$ &$\cdot$ &	1&	1&$\cdot$ &$\cdot$ &$\cdot$ &$\cdot$ &$\cdot$ &$\cdot$ &$\cdot$ &$\cdot$ &$\cdot$ &$\cdot$ &$\cdot$ &	1\\ \hline
\end{tabular}
\end{tiny}
\caption{Constructed incidence matrix of the biplane ${\cal B}_{9e}$, with canonical part omitted.}\label{Fig1}
\end{center}
\end{figure}

\section{Conjecture on non-transversal vectors}

Further inspection of canonical incidence matrices of biplanes of order $4$ has shown a remarkable property. In every subset ${\cal M}_i^v$ there is a vector $\alpha \in  {\cal M}_i^v$, $i=7,\ldots,16$ with a characteristic to have scalar product equal to $2$ with exactly $5$ vectors in any other subset. Exactly these $10$ vectors form sole canonical incidence matrix of biplanes of this order representing the biplane ${\cal B}_{4c}$. Based on these facts, we introduce the notion of {\it non-transversal} vectors of a biplane.
\begin{definition}
Let ${\cal B}$ be a biplane of order $k-2$. The set of vectors $\{a_1,a_2,\ldots,a_m\} \subset {\cal M}^v$ such that 
$$
f(a_i, a_j) =2, \enspace \forall i,j, \enspace i\neq j
$$ 
we shall call the set of non-transversal vectors of a biplane ${\cal B}$. 
\end{definition}
Clearly, once having $\binom{k-1}{2}$ non-transversal vectors, a biplane of order $k-2$ is constructed. 

The $j$-th {\it level set} $f_{a,j}^{-1}(2)$ of a vector $a \in {\cal M}_i^{v}$ is the set
$$
f_{a,j}^{-1}(2):=\{b:f(a,b)=2, \enspace b\in {\cal M}_j^{v}\},
$$
where $i.j \in \{k+1,\ldots,v\}$ and $i \neq j$.
Concretely, in case of biplanes of order 4 for the only vector $\alpha$ in a particular subset ${\cal \grave{M}}_i^{16}$ it holds 
$$
|f_{a,j}^{-1}(2)|=1, \enspace j=7,\ldots,16, \enspace i \neq j.
$$
This fact is in line with the following more general Conjecture \ref{Conj1}. However, for biplanes of higher orders there are subsets without such regular vectors. We shall call them {\it exceptional subsets}. More precisely, our experiments show that these are subsets ${\cal \grave{M}}_i^{v} \subset {\cal \grave{M}}^{v}, \enspace i \in E$, $$E=\{2k-2,3k-5,4k-9,\ldots,v-1,v\}.$$ The second difference is that the regularity does not hold for every $j$-th level set but, impressively, there are still vectors having many $j$-th level sets with the same cardinality - which makes us able to recognize and isolate them.

Let $a$ be vector in a biplane's $2$-space, $r,q(a)  \in \mathbb N$ and let the set $Q(a)$ be defined as $$Q(a):=\{(1\mod v)+k,(2\mod v)+k,\ldots,(i\mod v)+k,\ldots,(r\mod v)+k\},$$
with $i \notin E$. Then, our interest is in vectors $a \in {\cal \grave{M}}_i^v, \enspace \forall i \notin E$ having the property 
\begin{eqnarray} \label{RegVect}
|f_{a,j}^{-1}(2)|=q(a), \enspace j \in Q(a).
\end{eqnarray}
Now we define the sets ${\cal \grave{M}}^{v(\hat{a})}, {\cal \grave{M}}^{v(a)}$ as follows.
\begin{eqnarray*}
{\cal \grave{M}}^{v(\hat{a})}:&=&\{a \in {\cal \grave{M}}_i^v :    |f_{a,j}^{-1}(2)|=q(a), \enspace  \forall i \notin E, \enspace q(a) \in \mathbb N, \enspace j \in Q(a) \}\\
{\cal \grave{M}}^{v({a})}:&=&{\cal \grave{M}}^{v(\hat{a})} \cup {\cal \grave{M}}_i^v, \enspace i \in E
\end{eqnarray*}

\begin{conjecture}\label{Conj1}
Let ${\cal B}$ be a biplane and $a_1,a_2,\ldots,a_m \in {\cal \grave{M}}^v$ vectors obeying the property (\ref{RegVect}). Let ${\cal \grave{M}}^{v({a_1}{a_2}\ldots{a_m})}$ be the set defined as
$$
{\cal \grave{M}}^{v({a_1}{a_2}\ldots{a_m})}:={\cal \grave{M}}^{v(\hat{a_1})} \cup \ldots {\cal \grave{M}}^{v(\hat{a_m})} \cup {\cal \grave{M}}_i^v, \enspace i \in E.
$$
Then there exists the natural number $r>0$ such that $2$-spaces ${\cal \grave{M}}^{v({a_1}{a_2}\ldots{a_m})}$ and ${\cal \grave{M}}^{v}$ contain biplanes of the same isomorphism class.
\end{conjecture}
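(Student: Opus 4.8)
The plan is to reduce the conjecture to a single regularity statement about the rows of a biplane, and then settle the two inclusions of isomorphism classes separately. One inclusion is immediate: by construction ${\cal \grave{M}}^{v({a_1}{a_2}\ldots{a_m})}$ is a union of subsets of ${\cal \grave{M}}^{v}$, so any set of $\binom{k-1}{2}$ mutually non-transversal vectors living in the restricted space is already such a set inside ${\cal \grave{M}}^{v}$, and by the reconstruction remark following the definition of non-transversal vectors — that $\binom{k-1}{2}$ non-transversal vectors adjoined to $M_{[k]}$ satisfy (\ref{incmatcond1})--(\ref{incmatcond2}) — it yields a biplane already present in the full space. The content of the conjecture is therefore the reverse inclusion: every biplane obtained from ${\cal \grave{M}}^{v}$ must reappear inside ${\cal \grave{M}}^{v({a_1}{a_2}\ldots{a_m})}$.

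For that direction I would fix a biplane ${\cal B}$ and, using the trace-$v$ normalization employed throughout (as in Lemma \ref{Cor2}, where $tr(M)=1+\binom{k}{2}$ forces $m_{ii}=1$), put its incidence matrix into canonical form (\ref{CanMat}). Then row $i$ carries a $1$ in position $i$, hence lies in ${\cal \grave{M}}_i^{v}$, and by Definition \ref{Def1} any two rows have scalar product $2$; so rows $k+1,\ldots,v$ form $\binom{k-1}{2}$ mutually non-transversal vectors. Rows whose index $i$ lies in the exceptional set $E$ are retained in ${\cal \grave{M}}^{v({a_1}{a_2}\ldots{a_m})}$ automatically, since that set adjoins ${\cal \grave{M}}_i^{v}$ for all $i\in E$. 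The reverse inclusion thus comes down to showing that every biplane row with index $i\notin E$ is a \emph{regular} vector, i.e. obeys (\ref{RegVect}) with $|f_{a,j}^{-1}(2)|$ constant in $j$ over $Q(a)$; taking $r$ large enough that $Q(a)$ exhausts the non-exceptional indices then places the row in ${\cal \grave{M}}^{v(\hat{a_t})}$ for the appropriate $t$, which is exactly what is needed.

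The hard part — and the reason this is offered as a conjecture rather than a theorem — is precisely this regularity count. I would attempt it by a double count of the pairs $(b,\{p,q\})$ where $b\in{\cal \grave{M}}_j^{v}$ and $p,q$ are the two coordinates in which $a$ and $b$ both equal $1$, organized so that the dependence on $j$ cancels; equivalently, one would exhibit a coherent configuration (association scheme) on ${\cal \grave{M}}^{v}$ whose relation ``scalar product $2$'' has the numbers $q(a)$ among its intersection numbers, which are constant by definition. The obstruction is that this homogeneity genuinely fails at the indices $i=2k-2,3k-5,4k-9,\ldots,v-1,v$, which are the block boundaries of the canonical partition (\ref{CanMat}) into the submatrices $D^{i,j}$ — precisely the column starts $k+1,2k-1,3k-4,\ldots$ computed in the proof of Lemma \ref{Cor2}, each shifted by one — and these are exactly the subsets collected in $E$. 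A clean proof must therefore both establish constancy of $q(a)$ off $E$ and characterize $E$ as this boundary set rather than merely observe it experimentally, which amounts to showing that the fibres of the configuration align with the partition $\{{\cal \grave{M}}_i^{v}\}$ except along $E$. I expect this alignment to be where the real work lies, and until it is carried out the equality of isomorphism classes — though strongly supported by the constructions of Table \ref{Tab1} — remains conjectural.
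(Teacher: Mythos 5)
You should be aware at the outset that the paper contains no proof of this statement to compare against: it is advanced precisely as Conjecture \ref{Conj1}, supported only computationally --- by the constructions of biplanes of orders $7$ and $9$ reported in Table \ref{Tab1}, the level-set counts of Table \ref{table2}, and the conditional remarks (``If Conjecture \ref{Conj1} holds true then \ldots''). Your proposal, by your own candid account, is also not a proof, and your self-assessment is accurate. What you do settle is the easy half: since ${\cal \grave{M}}^{v({a_1}{a_2}\ldots{a_m})} \subseteq {\cal \grave{M}}^{v}$, any $\binom{k-1}{2}$ mutually non-transversal vectors found in the restricted set already yield a biplane inside the full $2$-space. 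You also correctly isolate the genuinely open content: that each isomorphism class realized in ${\cal \grave{M}}^{v}$ admits a representative all of whose non-canonical rows with index $i \notin E$ satisfy the regularity (\ref{RegVect}). But that step is exactly where your argument stops: neither the double count of pairs $(b,\{p,q\})$ nor the hoped-for coherent configuration on ${\cal \grave{M}}^{v}$ is carried out, and Table \ref{table2} already shows that $q(a)$ takes at least two distinct values (the $\alpha$ and $\beta$ types), so any constancy must be per-vector rather than global; likewise the identification of $E$ with the block boundaries of the canonical partition (\ref{CanMat}) is observed, not derived.

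Two further soft spots sit inside the reduction itself. First, it presupposes that every biplane (or a suitable dual) admits a canonical incidence matrix with $tr(M)=1+\binom{k}{2}$, so that row $i$ lies in ${\cal \grave{M}}_i^{v}$; but the paper treats the all-ones diagonal only as an empirically persistent property (``without known exception'', and for order $7$ only some duals admit trace $v$), so your normalization rests on an unproven hypothesis and at minimum requires replacing ${\cal B}$ by a representative of its class for which it holds. Second, your handling of $r$ points the wrong way: enlarging $r$ enlarges $Q(a)$, which \emph{strengthens} the membership condition for ${\cal \grave{M}}^{v(\hat{a})}$ and shrinks the restricted set, so ``taking $r$ large enough'' makes the needed inclusion harder, not easier; the existential quantifier on $r$ in the conjecture exists precisely because regularity may hold only over a window of level sets. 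None of this is a wrong turn --- the reduction is sensible and matches the spirit of how the author uses the conjecture for efficient construction --- but the decisive regularity count is missing from your proposal just as it is missing from the paper, and the statement remains conjectural on both sides.
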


Thus, biplanes of order $4$ have one vector $\alpha$ with constant cardinality within every $j$-th level set (equal to 5 in respect of ${\cal M}^v$ and equal to 1 in respect of ${\cal \grave{M}}^v$). Within biplanes of order $7$, $9$ and $11$ there are two types of vectors having the property (\ref{RegVect}). We let $\alpha$ and $\beta$ denote such vectors. Table \ref{table2} shows the number of these vectors in a particular subset ${\cal \grave{M}}_i^{v(a)}$ as well as the cardinal numbers $q(\alpha)$, $q(\beta)$ of their $j$-th level space, where $i=k+1,\ldots, v, \enspace i \notin E$, $j=k+1, \ldots, v$.

Obviously, if Conjecture \ref{Conj1} holds true then it substantially reduces constructions of isomorphic structures. We successfully use this conjecture for very efficient constructions of all biplanes of order 7 as well as biplanes ${\cal B}_{9c}$ and ${\cal B}_{9e}$. If Conjecture \ref{Conj1} is true then the only biplane of order 9 admitting both symmetry of incidence matrix and trace of the incidence matrix equal 56, is ${\cal B}_{9e}$ with the automorphism group order $| Aut({\cal B}_{9e})|=80640$.

\begin{table} [h]
\begin{center}
\begin{tabular}{rrrrr}
$k-2$ &$q(\alpha)$& $q(\beta)$& $|{\cal \grave{M}}_i^{(\alpha)}|$ & $|{\cal \grave{M}}_i^{(\beta)}|$\\
7 & 30&24&10& 60\\
9 & 1116&1098&315 & 2520\\
11 &87235&87178& 18144 &  181440\\
\end{tabular}
\end{center}
\caption{Cardinalities of $j$-th level space of $\alpha$ and $\beta$ types of vectors within biplanes.} \label{table2}
\end{table}

\section{Concluding remarks}

Two notable properties among biplane's incidence matrices are their symmetry and the trace equal to the number of points. Although many regularities of biplane's incidence matrix is getting lost within large parameters, trace equal to the number of points seems persistent. Both symmetry and trace equal to the number of points are present within large biplanes but the latter without known exception. 

In general these are independent properties. For example, no canonical incidence matrix representing biplane of order 7 is symmetric but some of duals admit trace equal to the number of points. On the contrary, all constructed incidence matrices of ${\cal B}_{9e}$ are symmetric and all have trace equal to $v$. Also, all constructed incidence matrices of  ${\cal B}_{9c}$ have trace equal to $v$ and none of these matrices is symmetric. Thus, in all our constructions incidence matrix symmetry behaves like a structural invariant of biplanes while it is not the case with trace. Possibly, statements and ideas demonstrated in this work can be extended to other finite geometries, primarily projective planes and triplanes.

\section*{Acknowledgment}

The author is thankful to Professor Dragutin Svrtan from the University of Zagreb for advices regarding this subject.


\begin{thebibliography}{10}

\bibitem{Asc} M. Aschbacher, On collineation groups of symmetric block designs, J. Combin.
    Theory, 11 (1971) 272--281.

\bibitem{BlBr} A. Blokhuis, A.E. Brouwer, Spectral characterization of a graph on the flags of the eleven point biplane, Des. Codes Cryptogr. 65 (2012) 65-69.

\bibitem{Cam} P.~Cameron, Biplanes, J. Mathematische Zeitschrift, 131 (1973) 85--101.

\bibitem{EsCe} V. \' Cepuli\' c, M. Essert, Biplanes (56,11,2) with automorphisms of order 4 fixing some point, Discrete Mathematics, 71 (1988) 9-17.

\bibitem{EsCe4} V. \' Cepuli\' c, M. Essert, Biplanes (56,11,2) with automorphism group $Z_2 \times Z_2$ fixing some point, J. Combin Theory Ser A, 48 (1988) 239-246.

\bibitem{HLW} M. Hall, R. Lane, D. Wales, Designs from permutation groups, J. Combin Theroy, 8 (1970) 12-22.

\bibitem{HuPi}  D.R. Hughes,  F.C. Piper,  Design theory,  Cambridge University Press,  Cambridge, 1985.

\bibitem{JT} Z.~Janko, Tran van Trung, A new biplane of order 9 with a small automorphism group, J. Combin Theory Ser A, 42 (1986) 305--309.

\bibitem{KO} P.~Kaski, P.R.J. \"{O}sterg\r ard, There are exactly five biplanes with k=11, J. Combin. Designs, 16 (2008) 117--127.

\bibitem{MaRo} R. Mathon, A. Rosa, {\it $2$-$(v,k,\lambda)$ Designs of Small Order}, in CRC Handbook of Combinatorial Designs Second Edition, C. J. Colbourn and J. H. Dinitz (Editors), CRC Press, Boca Raton, FL, 2007, pp. 25-58.

\bibitem{McKay} B.D.~McKay, Isomorph-free exhaustive generation, J. Algorithms, 26 (1998) 306--324.

\bibitem{Marg} Lj. Maranguni\'c,  Biplanes (56,11,2) with involutory collineation fixing 6 points, J. Combin Theory Ser A, 54 (1990) 149--163.

\bibitem{Mart} I. Martinjak, A Class of Symmetric Association Schemes as Inclusion of Biplanes, to appear, Ars Combin.



\bibitem{Sal1} C.J. Salwach, Planes, Biplanes and their codes, Amer Math Monthly, 88 (1981) 106-125.

\bibitem{Sal2} C.J. Salwach, J. A. Mezzaroba, The four biplanes with k=9, J. Combin Thoery Ser A, 24 (1978) 141-145.

\bibitem{Stin}  D.R. Stinson, Combinatorial Designs: Construction and Analysis, Springer, New York, 2004.

\end{thebibliography}
\end{document}